 \newtheorem{thm}{Theorem}[section]
 \newtheorem{prop}[thm]{Proposition} 
  \theoremstyle{definition}
 \newtheorem{defn}[thm] {Definition} 
 \theoremstyle{remark}
 \newtheorem{rem}[thm]{Remark}
\numberwithin{equation}{subsection}
\newcommand{\OO}{{\mathcal O}}
\newcommand{\I}{{\mathcal I}}
\newcommand{\Q}{{\mathcal Q}}
\newcommand{\LL}{{\mathcal L}}
\newcommand{\ZZ}{{\mathbb Z}}
\newcommand{\RR}{\mathbb R}
\DeclareMathOperator{\Hom}{{Hom}}
\newcommand{\oplusa}[2][]{\underset{#2}{\overset{#1}{\oplus}}}
\newcommand{\os}{\overset}
\newcommand{\wt}{\widetilde}
\newcommand{\ita}{\textit}
\newcommand{\enumera}{\begin{enumerate}}
\newcommand{\eenumera}{\end{enumerate}}
\newcommand{\alineas}[1]{\begin{array}{#1}}
\newcommand{\alinea}{\begin{array}{l}}
\newcommand{\ealinea}{\end{array}}
\newcommand{\ealineas}{\end{array}}
\newcommand{\Qcoh}{\operatorname{Qcoh}_k}
\newcommand{\Qc}{\operatorname{Qc}}
\newcommand{\Shv}{\operatorname{Shv}_k}
\newcommand{\GShv}{\operatorname{{\it G}-Shv}_k}
\newcommand{\Const}{\operatorname{Const}_k}
\newcommand{\LConst}{\operatorname{LConst}_k}
\newcommand{\Cov}{\operatorname{Cov}_k}
\newcommand{\Mod}{\operatorname{Mod}_k}
\newcommand{\Et}{\operatorname{Et}_k}
\newcommand{\Rep}{\operatorname{Rep}_k}
\DeclareMathOperator{\Aut}{{Aut}}
\DeclareMathOperator{\id}{{id}}
\newenvironment{dem}{\noindent{\underline{Proof}:}}{\hfill$\square$}
\newsavebox\CBox
\begin{document}

\title{Asphericity and B\"{o}kstedt-Neeman theorem}

\author{ F. Sancho de Salas}
\author{J.F. Torres Sancho}

\address{ \newline Fernando Sancho de Salas\newline Departamento de
Matem\'aticas and Instituto Universitario de F\'isica Fundamental y Matem\'aticas (IUFFyM)\newline
Universidad de Salamanca\newline  Plaza de la Merced 1-4\\
37008 Salamanca\newline  Spain}
\email{fsancho@usal.es}

\address{ \newline Juan Francisco Torres Sancho\newline Departamento de
Matem\'aticas\newline
Universidad de Salamanca\newline  Plaza de la Merced 1-4\\
37008 Salamanca\newline  Spain}
\email{juanfran24@usal.es}

\subjclass[2020]{18G80, 55N30}

\keywords{quasi-coherent modules, locally constant sheaves, aspherical spaces, derived categories}

\thanks {The  authors were supported by research project MTM2017-86042-P (MEC)}

\begin{abstract} We prove that a topological space is aspherical if and only if it satisfies  B\"{o}kstedt-Neeman Theorem, i.e., the derived category of complexes of locally constant sheaves is equivalent to the derived category of complexes of sheaves with locally constant cohomology.   

\end{abstract}

 

\maketitle

\section*{Introduction}  

Let $(X,\OO)$ be a ringed  space and let us denote $\operatorname{Qcoh}(X)$ the category of quasi-coherent $\OO$-modules, which is a faithful subcategory of the category $\operatorname{Mod}(X)$ of all $\OO$-modules. Let us assume that $\operatorname{Qcoh}(X)$ is an abelian subcategory of $\operatorname{Mod}(X)$ and let us denote $D^+(\operatorname{Qcoh}(X)) $ (resp. $D^+(X)$) the derived category of bounded below complexes of quasi-coherent modules (resp. of all $\OO$-modules); finally, let us denote $D_{\text{qc}}^+(X)$ the faithful subcategory of $D^+(X)$ of complexes with quasi-coherent cohomology. One has a natural functor
\[i\colon D^+(\operatorname{Qcoh}(X))\to D_{\text{qc}}^+(X).\]

If $(X,\OO)$ is a quasi-compact and  semi-separated scheme, then $i$ is an equivalence. This is B\"{o}kstedt-Neeman  theorem (see \cite{BokstedtNeeman}). If $(X,\OO)$ is a semi-separated schematic finite space, then B\"{o}kstedt-Neeman  theorem also holds (see \cite{SanchoTorres}). In this paper we investigate the validity of B\"{o}kstedt-Neeman  theorem in the topological case, i.e., when $\OO=\ZZ$ is the constant sheaf (notice that the category of topological spaces is a full  subcategory of that of all ringed spaces by considering any topological space as a ringed space with the constant sheaf $\ZZ$). Our main result is to prove that $(X,\ZZ)$ satisfies B\"{o}kstedt-Neeman theorem if and only if $X$ is aspherical. More generally, for any ring $k$, we shall prove (Theorem \ref{theorem}) that $(X,k)$ satisfies B\"{o}kstedt-Neeman  theorem if and only if $X$ is $k$-aspherical (see Definition \ref{aspherical}).

In section 1 we shall prove that the category of quasi-coherent sheaves coincides with that of locally constant sheaves, on locally connected spaces. We shall also recall the equivalence between locally constant sheaves and representations of the fundamental group. We shall introduce the quasicoherator functor which will play an essential role (as it does on schemes).

Section 2 is devoted to the main result. Also a secondary result is given, about a simple proof of the invariance of the cohomology of locally constant sheaves under a homotopy equivalence (see \cite{KwasikSung}).

\section{Quasi-coherent modules, locally constant sheaves and $G$-modules}

Let $X$ be a topological space and $k$ be a ring. We shall denote by $\Shv(X)$ the category of sheaves of $k$-modules on $X$ and by $\operatorname{Mod}(k)$ the category of $k$-modules. A sheaf always means a sheaf of $k$-modules and a morphim of sheaves means a morphim of sheaves of $k$-modules.

Each $k$-module $M$ defines a constant sheaf $ M_X$ on $X$. We shall denote by $\Const(X)$ the subcategory of $\Shv (X)$ consisting in constant sheaves. If $\pi\colon X\to\{*\}$ is the projection onto a point, then $\pi^{-1}M= M_X$.
\begin{prop}\label{const ab category} If $X$ is connected, the functor $\pi^{-1} \colon \text{\rm Mod} (k)  \to \Const(X)$ is an equivalence. In particular, $\Const(X)$ is an abelian subcategory of $\Shv(X)$. \end{prop}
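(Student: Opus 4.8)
The plan is to treat the two halves of "equivalence" separately: essential surjectivity is immediate, and full faithfulness reduces to a single concrete computation of global sections; the assertion about abelian subcategories is then formal. Essential surjectivity of $\pi^{-1}\colon\operatorname{Mod}(k)\to\Const(X)$ is nothing but the definition of $\Const(X)$, namely the full subcategory of $\Shv(X)$ whose objects are the constant sheaves $M_X=\pi^{-1}M$ with $M\in\operatorname{Mod}(k)$.

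For full faithfulness I would invoke the adjunction $\pi^{-1}\dashv\pi_*$ between $\Shv(X)$ and $\Shv(\{*\})=\operatorname{Mod}(k)$, together with the standard fact that a left adjoint is fully faithful exactly when the unit $\eta\colon\Id\to\pi_*\pi^{-1}$ is an isomorphism. Since $\pi_*\pi^{-1}M=(\pi^{-1}M)(X)=M_X(X)$, the whole statement comes down to showing that the unit map $\eta_M\colon M\to M_X(X)$ is an isomorphism for every $k$-module $M$. This is where the hypothesis on $X$ is used: describing $M_X$ as the sheafification of the constant presheaf, a section of $M_X$ over $X$ is a locally constant map $X\to M$ (with $M$ discrete), and on a connected non-empty space such a map is constant, so $M_X(X)=M$ and $\eta_M$ is the identity. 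Equivalently, one writes $\Hom_{\Shv(X)}(\pi^{-1}M,\pi^{-1}N)\cong\Hom_k(M,(\pi^{-1}N)(X))=\Hom_k(M,N)$, naturally in $M$ and $N$, and recognises this as $\pi^{-1}$ on Hom-sets.

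To obtain the last assertion, I would use that $\pi^{-1}\colon\operatorname{Mod}(k)\to\Shv(X)$ is exact (inverse image of sheaves is exact) and, by what precedes, fully faithful with essential image $\Const(X)$. Exactness then gives, for a morphism $f=\pi^{-1}\phi\colon M_X\to N_X$, that the kernel and cokernel computed in $\Shv(X)$ are $\Ker f=(\Ker\phi)_X$ and $\Coker f=(\Coker\phi)_X$, again constant sheaves; similarly $\Const(X)$ is stable under finite direct sums. Hence the inclusion $\Const(X)\hookrightarrow\Shv(X)$ is exact, and $\Const(X)$, being equivalent to $\operatorname{Mod}(k)$, is abelian.

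The only non-formal ingredient, and therefore the main obstacle, is the computation $M_X(X)=M$ for connected $X$: one must fix the correct model of the constant sheaf and check that the argument genuinely requires connectedness of $X$ (and $X\neq\emptyset$), everything else being pure adjunction formalism.
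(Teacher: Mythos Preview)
Your proof is correct and follows essentially the same approach as the paper: the paper's entire argument is the single observation that the unit $M\to\pi_*\pi^{-1}M$ is an isomorphism when $X$ is connected, which is exactly the core of your full-faithfulness step. You simply spell out the surrounding adjunction formalism and the abelian-subcategory claim that the paper leaves implicit.
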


\begin{proof} For any $M\in \operatorname{Mod}(k)$, the natural morphism $M\to\pi_*\pi^{-1}M$ is an isomorphism because $X$ is connected.
\end{proof}

We shall denote by $\LConst (X)$ the subcategory of $\Shv(X)$ whose objects are the locally constant sheaves on $X$. Locally constant sheaves are also called local systems in the literature. If $X$ is locally connected, then $\LConst(X)$ is an abelian subcategory of $\Shv(X)$ by Proposition \ref{const ab category}.

\begin{defn} A sheaf of $k$-modules $F$ on $X$ is \ita{quasi-coherent} if it is locally a cokernel of free $k$-modules, i.e., for  each point $x \in X$ there exists an open neighbourhood $U_x$ and an exact sequence of sheaves on $U_x$ 
$$  k_{U_x}^{\oplus I} \to  k_{U_x}^{\oplus J} \to F _{\vert U_x}\to 0, $$
where $I$ and $J$ are arbitrary sets of indexes.
\end{defn}

We shall denote by $\Qcoh(X)$ the full subcategory of $\Shv(X)$ whose objects are the quasi-coherent sheaves.

\begin{prop}\label{qc=lc} Any locally constant sheaf is quasi-coherent. Moreover, if $X$ is locally connected, the converse also holds and then $\Qcoh(X)=\LConst(X)$ is an abelian subcategory of $\Shv(X)$.
\end{prop}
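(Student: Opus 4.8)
The plan is to establish the two directions separately. For the first claim---any locally constant sheaf is quasi-coherent---I would argue purely locally: if $F$ is locally constant, then around each point $x$ there is an open $U_x$ on which $F_{|U_x}\cong M_{U_x}$ for some $k$-module $M$. Choosing a presentation $k^{\oplus I}\to k^{\oplus J}\to M\to 0$ of $M$ by free $k$-modules and applying the exact functor $\pi^{-1}$ (pullback along $U_x\to\{*\}$), which commutes with cokernels and sends $k^{\oplus I}$ to $k_{U_x}^{\oplus I}$, yields exactly the required exact sequence $k_{U_x}^{\oplus I}\to k_{U_x}^{\oplus J}\to F_{|U_x}\to 0$. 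So this direction is immediate from the definitions.

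For the converse, assume $X$ is locally connected and let $F$ be quasi-coherent; I want to show $F$ is locally constant, and the claim being local, it suffices to treat the case where $X$ itself is connected and there is a global exact sequence $k_X^{\oplus I}\xrightarrow{\varphi} k_X^{\oplus J}\to F\to 0$. Shrinking further, I may also assume $X$ is connected (local connectedness guarantees a basis of connected opens). The key point is to analyze the morphism $\varphi\colon k_X^{\oplus I}\to k_X^{\oplus J}$ of constant sheaves on a connected space: by Proposition \ref{const ab category}, $\Const(X)$ is equivalent to $\operatorname{Mod}(k)$ via $\pi^{-1}$, hence $\varphi$ is of the form $\pi^{-1}(u)$ for a unique $k$-linear map $u\colon k^{\oplus I}\to k^{\oplus J}$. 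Since $\pi^{-1}$ is exact, $F=\operatorname{Coker}\varphi\cong \pi^{-1}(\operatorname{Coker} u)$, which is a constant sheaf. Thus $F$ is locally constant on $X$, and globally $\Qcoh(X)=\LConst(X)$.

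Finally, once the equality $\Qcoh(X)=\LConst(X)$ is known, the assertion that it is an abelian subcategory of $\Shv(X)$ follows because $\LConst(X)$ is already known to be abelian on locally connected spaces (as remarked just before the definition of quasi-coherent sheaf, using Proposition \ref{const ab category}).

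The main obstacle is the reduction step in the converse direction: one must be careful that "locally constant" is genuinely a local condition compatible with the local presentations defining quasi-coherence, and that local connectedness lets one choose the presentation neighbourhoods $U_x$ to be connected so that Proposition \ref{const ab category} applies on each of them. Everything else is a formal consequence of the exactness of $\pi^{-1}$ and the equivalence $\Const(U)\simeq\operatorname{Mod}(k)$ for connected $U$.
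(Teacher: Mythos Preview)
Your proof is correct and follows essentially the same approach as the paper: both reduce each direction to a local statement, use a free presentation of $M$ together with exactness of $\pi^{-1}$ for the forward implication, and invoke Proposition~\ref{const ab category} on a connected open to conclude that the cokernel of a map between constant sheaves is constant for the converse. Your version is slightly more explicit in unpacking why the cokernel is constant (writing $\varphi=\pi^{-1}(u)$ and using exactness of $\pi^{-1}$), whereas the paper simply cites that $\Const(X)$ is an abelian subcategory, but the argument is the same.
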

\begin{dem} Let $\LL$ be a locally constant sheaf. Since being quasi-coherent is a local question we can assume that $\LL$ is a constant sheaf,  $\LL=  {M_X}$. Now,  $M$ is a cokernel of free $k$-modules, i.e. there is an exact sequence
$$ k^{\oplus I} \to k^{\oplus J} \to M \to 0$$
Aplying the functor $\pi^{-1}$ to this exact sequence, we have the exact sequence
$$  {k_X}^{\oplus I} \to  {k_X}^{\oplus J} \to  {M_X} \to 0$$
hence $ {M_X}$ is quasi-coherent.

Now assume that $\LL$ is quasi-coherent and $X$ is locally connected. Since being locally constant is a local question, we can assume that $X$ is connected and that there exists an exact sequence of sheaves  $ {k_X}^{\oplus I} \to  {k_X}^{\oplus J} \to \LL \to 0$. By Proposition \ref{const ab category}, $\LL$ is constant. 
\end{dem}

\medskip

Let us recall now the equivalence between locally constant sheaves on $X$, coverings of $X$, and $G$-modules (where $G$ is the fundamental group of $X$). Though these results are well known,  they are scattered in the literature and it is difficult to find precise references. We give a brief account of them for the reader's convenience.

\begin{defn} A \ita{$k$-module space over $X$} is a topological space over $X$,  $p \colon X' \to X$, such that

(a) For each $x \in X$, the fibre $p^{-1}(x)$ has a $k$-module structure.

(b) The maps 
$$   X' \times_X X' \overset +\longrightarrow X', \ \ \ \ \ \ \     k \times X' \overset\cdot\to X' $$ 
induced by the $k$-module structure on the fibres are continuous (for the second, $k$ is given the discrete topology). 
\end{defn}

Let $p \colon X' \to X$, $q \colon X'' \to X$ be two $k$-module spaces over $X$. A morphism between them is a continuous map $X' \to X''$ over $X$ compatible with the two operations (i.e, it is fibrewise a morphism of $k$-modules). Let us denote $\operatorname{Top}_{k-\text{mod}}(X)$ the category of $k$-module spaces over $X$.
We shall denote by $\Et(X)$ the faithful subcategory of $\operatorname{Top}_{k-\text{mod}}(X)$ whose objects are the $k$-module spaces over $X$ that are étale (i.e., $X'\to X$ is a local homeomorphism) and by $\Cov(X)$ the full subcategory of $\Et(X)$ whose objects are coverings of $X$.

The well known correspondence between sheaves on $X$ and étale spaces over $X$ yields an equivalence
\[ \Shv(X)= \Et(X)\] that induces an equivalence
\[ \LConst(X)=\Cov(X)\] and  an equivalence between constant sheaves and trivial coverings. These correspondences are functorial: if $f\colon X\to Y$ is a continuous map and $F$ is a sheaf on $Y$, the étalé space associated to $f^{-1}F$ is the pull-back to $X$ of the étalé space associated to $F$.

\subsection{Locally simply connected spaces}

Let us see some additional properties of the category of sheaves on a locally simply connected topological space. By a simply connected space we mean a connected space such that every covering is trivial (this is the case for example if $X$ is path connected and has trivial fundamental group). On a simply connected space $X$ the functor
\[\aligned \Mod(k)&\to \Qcoh(X)\\ M &\mapsto M_X\endaligned\] is an equivalence, whose inverse is the global sections functor $\LL\mapsto \Gamma(X,\LL)$.

\begin{prop}\label{losimcon} Let $X$ be a locally simply connected topological space. If $\{\LL_i\}_{i \in I}$ is a family of quasi-coherent modules  on $X$, then  $\oplusa {i \in I} \LL_i$ is also quasi-coherent. 
\end{prop}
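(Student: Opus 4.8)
The plan is to reduce the statement to a local computation, exploiting that quasi-coherence is a local property while the direct sum is the sheafification of the presheaf direct sum. First I would fix a point $x \in X$ and choose, using local simple connectedness, an open neighbourhood $U$ of $x$ that is connected and simply connected. On such a $U$, Proposition \ref{qc=lc} tells us that every quasi-coherent sheaf is constant, so each restriction $\LL_{i\vert U}$ is of the form $(M_i)_U$ for a $k$-module $M_i = \Gamma(U, \LL_{i\vert U})$, and the equivalence $\Mod(k) \simeq \Qcoh(U)$ recalled just before the statement is exact and, being an equivalence of categories with the constant-sheaf functor as one side, commutes with arbitrary direct sums (the constant sheaf functor $\pi^{-1}$ is a left adjoint, hence preserves colimits). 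Thus $\bigl(\oplusa{i\in I}\LL_i\bigr)\big\vert_U$ ought to be the constant sheaf $\bigl(\oplusa{i\in I} M_i\bigr)_U$, which is quasi-coherent by Proposition \ref{qc=lc} again. Since $x$ was arbitrary and being quasi-coherent is local, we conclude.

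The one genuine subtlety — and the step I expect to be the main obstacle — is justifying that the sheaf direct sum $\oplusa{i\in I}\LL_i$, formed in $\Shv(X)$, actually restricts on $U$ to the \emph{constant} sheaf on the module direct sum, rather than merely to the sheafification of $U' \mapsto \oplusa{i\in I}\LL_i(U')$. The point is that restriction to an open set is exact and preserves colimits, so $\bigl(\oplusa{i\in I}\LL_i\bigr)\big\vert_U = \oplusa{i\in I}(\LL_{i\vert U})$ as sheaves on $U$; then one must check that the coproduct of the constant sheaves $(M_i)_U$ in $\Shv(U)$ is the constant sheaf $(\oplusa{i\in I} M_i)_U$. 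This is where connectedness of $U$ is used: for a general space the coproduct of constant sheaves need not be constant (its stalks are the module direct sum but its sections over a disconnected open set are larger), but on a connected $U$ the functor $M \mapsto M_U$ is an equivalence onto $\Const(U)$ by Proposition \ref{const ab category}, and since $\pi^{-1}\colon \Mod(k)\to\Shv(U)$ is a left adjoint it sends the coproduct $\oplusa{i\in I}M_i$ to the coproduct of the $(M_i)_U$, which therefore is $(\oplusa{i\in I}M_i)_U$ and in particular constant.

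I would organize the write-up as: (1) reduce to checking quasi-coherence at each $x$ and pick a connected, simply connected open $U \ni x$; (2) observe restriction to $U$ commutes with direct sums, so it suffices to treat $\oplusa{i\in I}(M_i)_U$ in $\Shv(U)$; (3) invoke that $\pi^{-1}$ is a left adjoint (equivalently, that $M\mapsto M_U$ is an equivalence $\Mod(k)\simeq\Const(U)$ on connected $U$) to identify this with $(\oplusa{i\in I}M_i)_U$; (4) apply Proposition \ref{qc=lc} to conclude this constant sheaf is quasi-coherent on $U$, hence $\oplusa{i\in I}\LL_i$ is quasi-coherent at $x$. No delicate estimates are needed; the entire content is the interplay between "local" for quasi-coherence and "colimit-preserving" for pullback along the projection to a point.
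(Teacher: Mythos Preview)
Your proposal is correct and follows exactly the paper's approach: restrict to a simply connected neighbourhood, note each $\LL_i$ becomes constant there, and use that a direct sum of constant sheaves is constant. The only difference is that you spell out carefully why the sheaf direct sum of constant sheaves is again constant (via $\pi^{-1}$ being a left adjoint), whereas the paper simply asserts this in one clause.
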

\begin{dem} Let $x \in X$ and let $U_x$ be  a simply connected open neighbourhood  of $x$. Then, $\LL_i |_{U_x}$ is a constant sheaf for all $i \in I$, because any locally constant sheaf on a simply connected space  is constant. Since the direct sum  of constant sheaves is a constant sheaf, we conclude that $\oplus_i\LL_i$ is quasi-coherent.  
\end{dem}

\begin{defn} Let $G$ be a group. A \ita{$G$-$k$-module} (or a $G$-module, for short) is a $k$-module $E$ endowed with an action of $G$ by $k$-linear automorphisms. They are also called $k[G]$-modules or   $k$-linear representations of $G$ in the literature. A morphism of $G$-modules $f \colon E \to E'$  is a morphism of $k$-modules compatible with the action of $G$. We shall denote by $\Mod(G)$  the category of $G$-modules.  Any $k$-module may be thought of as a $G$-module, with the trivial action. These are called {\it trivial} $G$-modules. Thus we have a functor
\[j\colon \operatorname{Mod}(k) \to \Mod(G).\]
\end{defn}

Let $X$ be a connected  and locally simply connected  topological space. Then $X$ admits a universal covering, $\phi \colon \widetilde{X} \to X$. Let $G_X = \Aut_X \widetilde{X}$ (which is isomorphic to the fundamental group of $X$ at any point). Then:

\begin{thm}\label{Qcoh=Rep} One has an equivalence
\[ \Qcoh(X)=\Mod(G_X).\]
\end{thm}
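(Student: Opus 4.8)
The plan is to build the equivalence $\Qcoh(X) = \Mod(G_X)$ by composing the already-established equivalence $\Qcoh(X) = \LConst(X) = \Cov(X)$ with an equivalence $\Cov(X) = \Mod(G_X)$, the latter being the classical "monodromy/fibre" description of coverings of a connected, locally simply connected space. Concretely, fix a point $x_0 \in X$ and a lift $\tilde x_0 \in \widetilde X$. Given a covering $p\colon X' \to X$, its fibre $E = p^{-1}(x_0)$ is a $k$-module (by the definition of $k$-module space over $X$), and the universal covering property gives a canonical action of $G_X = \Aut_X\widetilde X$ on $E$ by $k$-linear automorphisms; this defines a functor $\Cov(X) \to \Mod(G_X)$, $X' \mapsto p^{-1}(x_0)$. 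The monodromy action can be described directly: an element $\gamma \in G_X$ is a deck transformation of $\widetilde X$, and since $\widetilde X$ is simply connected the pulled-back covering $\widetilde X \times_X X' \to \widetilde X$ is trivial, so it is identified with $\widetilde X \times E$; the deck transformation $\gamma$ then acts on $\widetilde X \times E$ and on $E$ via the second coordinate, giving the $G_X$-module structure.

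First I would make precise the quasi-inverse functor $\Mod(G_X) \to \Cov(X)$. Given a $G_X$-module $E$, form the associated (twisted) covering
\[
\widetilde X \times_{G_X} E := (\widetilde X \times E)/G_X,
\]
where $G_X$ acts diagonally (via deck transformations on $\widetilde X$ and via the representation on the discrete set $E$), with the map to $X \cong \widetilde X/G_X$ induced by the projection; since $G_X$ acts freely and properly discontinuously on $\widetilde X$, this is a covering of $X$, and the fibrewise $k$-module structure coming from $E$ is continuous because $E$ is discrete, so it is an object of $\Cov(X)$. One then checks functoriality in both directions and constructs the natural isomorphisms $X' \cong \widetilde X \times_{G_X} p^{-1}(x_0)$ and $E \cong (\widetilde X \times_{G_X} E)$ restricted to the fibre over $x_0$. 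The first isomorphism uses that $\widetilde X \to X$ is the universal covering together with the fact that a covering is determined by its monodromy; the second is essentially the identity $(\widetilde X \times E)/G_X$ has fibre over $\tilde x_0$ canonically $E$.

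Equivalently — and this is the route I would actually write down, since it keeps the $k$-module bookkeeping cleaner — I would work directly with sheaves rather than étalé spaces. Composing with $\LConst(X) = \Cov(X)$, the functor becomes: send a locally constant sheaf $\LL$ to the $k$-module $\Gamma(\widetilde X, \phi^{-1}\LL)$, which carries a natural $G_X$-action because $G_X$ acts on $\widetilde X$ over $X$ and hence on $\phi^{-1}\LL$ and its global sections. Since $\widetilde X$ is simply connected, $\phi^{-1}\LL$ is a constant sheaf (Proposition~\ref{qc=lc} and the remark that on a simply connected space every locally constant sheaf is constant), so $\Gamma(\widetilde X, \phi^{-1}\LL) = E_{\widetilde X}$ with $E$ a $k$-module canonically identified with any stalk of $\LL$; the quasi-inverse sends a $G_X$-module $E$ to the sheaf on $X = \widetilde X/G_X$ of $G_X$-invariant sections of the constant sheaf $E_{\widetilde X}$, i.e. the descent of $E_{\widetilde X}$ along $\phi$. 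That $\phi^{-1}$ and $\phi_*(-)^{G_X}$ are mutually quasi-inverse between $\LConst(X)$ and $\Mod(G_X)$ is the statement of faithfully flat (Galois) descent for the covering $\phi$, and verifying the unit and counit are isomorphisms is a stalkwise check using simply connectedness of $\widetilde X$.

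The main obstacle, and where I would spend the most care, is not the construction of the two functors but the verification that the unit $\LL \to \phi_*(\phi^{-1}\LL)^{G_X}$ (equivalently $X' \to \widetilde X\times_{G_X} p^{-1}(x_0)$) is an isomorphism: this requires knowing that a locally constant sheaf on $X$ is recovered from its monodromy, which in turn rests on the universal property of $\widetilde X$ and on $G_X$ acting properly discontinuously so that the quotient and invariants behave well. A subsidiary point needing attention is continuity/$k$-linearity: one must check that the $G_X$-action on the fibre is by $k$-linear maps (immediate from the fact that deck transformations are morphisms of $k$-module spaces over $X$) and, conversely, that the twisted product $\widetilde X \times_{G_X} E$ genuinely lies in $\Et(X)$ with continuous module operations — which is where the discreteness of $k$ and of $E$ is used. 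Once these are in place, composing with the equivalence $\Qcoh(X) = \LConst(X)$ of Proposition~\ref{qc=lc} yields the asserted equivalence $\Qcoh(X) = \Mod(G_X)$.
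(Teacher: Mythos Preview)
Your proposal is correct and follows essentially the same approach as the paper: the paper defines the two functors exactly as you do---$E\mapsto \LL_E$ via the associated covering $(\widetilde X\times E)/G_X\to X$, and $\LL\mapsto E_\LL:=\Gamma(\widetilde X,\phi^{-1}\LL)$ with its natural $G_X$-action---and asserts (as a well-known fact) that they are mutually inverse. If anything, your outline is more detailed than the paper's, which treats the verification of the unit and counit as standard and omits it.
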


Though this is a well known result (since $\Qcoh(X)=\LConst(X)$), it will be convenient to recall the explicit correspondence. Let us denote $G=G_X$.
 Let us first define a functor
$$\aligned \Mod(G) &\to \Qcoh(X)\\ E&\mapsto \LL_E\endaligned$$ in the following way. Let $E$ be a $G$-module. We endowe the set $E$ with the discrete topology and consider the trivial covering $\widetilde{X} \times E \to \widetilde{X}$. The group $G$ acts on $\widetilde{X} \times E$, $g(\tilde{x},e)= (g\tilde{x},ge)$, and the projection $\widetilde{X} \times E \to \widetilde{X}$ is equivariant. So taking quotient by the action of $G$, we have a continuous map
$$ \wt X_E:=(\widetilde{X} \times E)/G \longrightarrow \widetilde{X}/G = X$$
which is a $k$-module space over $X$ and a covering. Then, $\LL_{E}:=$ sheaf of sections of $\wt X_E \to X$,  is a locally constant sheaf.

Conversely, let us define a functor $$\aligned \Qcoh(X) &\to \Mod(G)\\ \LL&\mapsto E_\LL\endaligned$$ as follows. Let $\LL$ be a quasi-coherent module on $X$. Then $\phi^{-1} \LL$ is a quasi-coherent module $\widetilde{X}$. Since $\widetilde{X}$ is simply connected, $\phi^{-1} \LL$  is a constant sheaf. Thus $\phi^{-1} \LL=(E_{\LL})_{\widetilde{X}}$ with $E_{\LL}:= \Gamma(\wt X, \phi^{-1} \LL)$. The group $G$ acts in $\phi^{-1} \LL$, so it acts in $E_{\LL}$,  i.e. $E_\LL$ is a $G$-module. 

The assignations $E\mapsto \LL_E$ and $\LL\to E_\LL$ give the equivalence of Theorem \ref{Qcoh=Rep}.


\begin{rem}\label{obs}  
The equivalence of  Theorem \ref{Qcoh=Rep} restricts to an equivalence between the category of  constant sheaves and the category of trivial $G$-modules, giving us a conmutative diagram
$$\xymatrix{   \text{Mod}(k) \ar[r]^{  \pi^{-1}} \ar@2{-}[d]  & \Qcoh (X)  \ar@2{-}^{\text{Th.}\ref{Qcoh=Rep}}[d]  \\
 \text{Mod} (k) \ar[r]^{j\ \ }   & \Rep(X)  }$$
where $\pi \colon X \to \{*\}$ is the projection to a point. Since the right adjoint  of $\pi^{-1}$ is $\pi_*=\Gamma(X,-)$ and the right ajoint of $j$ is the ``taking invariants'' functor $(-)^G$, we have a conmutative diagram
$$\xymatrix{ \Qcoh (X) \ar[r]^{\Gamma(X,-)} \ar@2{-}[d]_{\text{Th.}\ref{Qcoh=Rep}}  & \text{Mod}(k) \ar@2{-}[d] \\
\Rep(X) \ar[r]^{(-)^G} & \text{Mod} (k)}$$ that is, $$E^G=\Gamma(X,\LL_E)$$ for any $G$-module $E$.
\end{rem}		
		
%
%

\subsection{Quasicoherator functor}

In all this section, we assume $X$ is connected and locally simply connected, $\phi\colon \wt X\to X$ is a universal covering and $G=\Aut_X\wt X$ is the fundamental group.

\begin{defn} A quasi-coherent sheaf $\I$ is called {\em qc-injective} if it is an injective object of the category $\Qcoh(X)$, i.e., if $\Hom(\quad,\I)$ is exact on quasi-coherent sheaves. Notice that a qc-injective quasi-coherent sheaf is {\it not} an injective sheaf in general. By the equivalence of Theorem \ref{Qcoh=Rep}, $\I$ is qc-injective if and only if its corresponding $G$-module $E_\I$ is an injective $G$-module. 
\end{defn}

 Let us consider the inclusion functor
$$ i \colon \Qcoh (X) \hookrightarrow \text{Shv}_k (X).$$

\begin{defn} Since $i$ is an exact functor and commutes with direct sums (Proposition \ref{losimcon}), it has a right adjoint (by Grothendieck's representability theorem)
$$\Qc \colon  \Shv  (X) \to \Qcoh (X).$$
Thus,  for any $\LL \in \Qcoh (X)$ and any $F \in \Shv  (X)$ there is a natural isomorphism
$$ \Hom(i(\LL),F) = \Hom(\LL,\Qc(F)).$$ 
\end{defn}
 
$\Qc$ is left exact and it is the identity on $\Qcoh(X)$: for any quasi-coherent sheaf $\LL$ one $X$, one has $\Qc(\LL)=\LL$. Since $i$ is exact, $\Qc$ takes injective sheaves into qc-injective quasi-coherent sheaves.

Let us give a more explicit description of the functor $\Qc$ in terms of the universal covering $\phi\colon \wt X\to X$. The functor   $\Qcoh(X)  \os{\sim}{\to}  \Mod(G_X), \LL  \longmapsto  E_{\LL}= \Gamma(\widetilde{X}, \phi^{-1} \LL)$ can be extended to the category of all  sheaves of $k$-modules; that is, we have a functor $\Shv(X) \to \Mod(G_X)$, $F  \mapsto   \Gamma(\widetilde{X},\phi^{-1} F)$. Let us see that this functor coincides with $\Qc$ (via the equivalence $\Qcoh(X)=\Mod (G_X)$).

\begin{prop}\label{Qc=Gamma} For any sheaf $F$ on $X$, $\Qc(F)$ is the quasi-coherent sheaf corresponding to the $G$-module $\Gamma(\wt X,\phi^{-1}F)$. Thus, we shall put
\[ \Qc(F)=\Gamma (\wt X,\phi^{-1}F)\] via the equivalence of Theorem \ref{Qcoh=Rep}.
\end{prop}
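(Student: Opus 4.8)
The plan is to show that the functor $\Shv(X)\to\Qcoh(X)$ which sends a sheaf $F$ to the quasi-coherent sheaf attached by Theorem~\ref{Qcoh=Rep} to the $G$-module $\Gamma(\wt X,\phi^{-1}F)$ is right adjoint to the inclusion $i\colon\Qcoh(X)\hookrightarrow\Shv(X)$; since a right adjoint is unique up to canonical isomorphism, this identifies it with $\Qc$ and proves the proposition. Using Theorem~\ref{Qcoh=Rep} to replace $\Qcoh(X)$ by $\Mod(G)$, what has to be produced is an isomorphism
\[\Hom_{\Shv(X)}(\LL,F)\;\cong\;\Hom_{G}\big(E_\LL,\ \Gamma(\wt X,\phi^{-1}F)\big),\]
natural in $\LL\in\Qcoh(X)$ and $F\in\Shv(X)$, where $E_\LL=\Gamma(\wt X,\phi^{-1}\LL)$ as before.

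The first step is descent along the Galois covering $\phi\colon\wt X\to X$ with group $G$. Recall that $\phi^{-1}$ is an equivalence of $\Shv(X)$ with the category of $G$-equivariant sheaves on $\wt X$, a quasi-inverse being $\mathcal{S}\mapsto(\phi_*\mathcal{S})^{G}$; equivalently, the unit $F\to\phi_*\phi^{-1}F$ identifies $F$ with the subsheaf $(\phi_*\phi^{-1}F)^{G}$ of $G$-invariants, as one checks over the evenly covered connected opens $U$, where $(\phi_*\phi^{-1}F)(U)$ is the product of copies of $F(U)$ indexed by the $G$-torsor of sheets above $U$, on which $G$ acts by the regular representation, so that the invariants are the diagonal copy of $F(U)$. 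Consequently, for any sheaves $\LL,F$ on $X$ one has
\[\Hom_{\Shv(X)}(\LL,F)\;\cong\;\Hom_{\Shv(\wt X)}(\phi^{-1}\LL,\phi^{-1}F)^{G},\]
the $G$-action on the right being the one assembled from the equivariant structures of $\phi^{-1}\LL$ and $\phi^{-1}F$ (concretely, this also follows by applying the left exact functor $\Hom_{\Shv(X)}(\LL,-)$, which commutes with products and hence with $(-)^{G}$, to $F\cong(\phi_*\phi^{-1}F)^{G}$ and then using the adjunction $\phi^{-1}\dashv\phi_*$).

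Next I specialize to $\LL$ quasi-coherent. Then $\phi^{-1}\LL$ is quasi-coherent (Proposition~\ref{qc=lc}), hence constant since $\wt X$ is simply connected: $\phi^{-1}\LL=(E_\LL)_{\wt X}$ with $E_\LL=\Gamma(\wt X,\phi^{-1}\LL)$, and its equivariant structure is exactly the one recording the $G$-module structure of $E_\LL$, by the construction recalled before Theorem~\ref{Qcoh=Rep}. The adjunction between the constant-sheaf functor and the global-sections functor $\Gamma(\wt X,-)$ on $\wt X$ (i.e. $\pi^{-1}\dashv\pi_*$ for the projection $\pi\colon\wt X\to\{*\}$) then gives a natural isomorphism
\[\Hom_{\Shv(\wt X)}\big((E_\LL)_{\wt X},\phi^{-1}F\big)\;\cong\;\Hom_{k}\big(E_\LL,\Gamma(\wt X,\phi^{-1}F)\big),\]
and this isomorphism is $G$-equivariant because every functor and every unit and counit involved is natural in $\wt X$, hence compatible with the action of $G$ on $\wt X$. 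Taking $G$-invariants on both sides and using the elementary identity $\Hom_{k}(E,M)^{G}=\Hom_{G}(E,M)$ for $G$-modules $E,M$ (with $G$ acting on $\Hom_k(E,M)$ by $(g\cdot f)(e)=g\,f(g^{-1}e)$) yields
\[\Hom_{\Shv(X)}(\LL,F)\;\cong\;\Hom_{G}\big(E_\LL,\Gamma(\wt X,\phi^{-1}F)\big),\]
natural in $\LL$ and $F$. By uniqueness of right adjoints, together with Theorem~\ref{Qcoh=Rep}, $\Qc(F)$ is then the quasi-coherent sheaf corresponding to the $G$-module $\Gamma(\wt X,\phi^{-1}F)$, as asserted.

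The step I expect to require the most care is the bookkeeping of the $G$-equivariant structures: one must verify that the descent isomorphism and the adjunction isomorphism on $\wt X$ are genuinely $G$-equivariant, so that passing to $G$-invariants on each side is legitimate, and that the equivariant structure of $\phi^{-1}\LL$ really is the one from which $E_\LL$ inherits its $G$-action. Everything else --- the adjunctions $\phi^{-1}\dashv\phi_*$ and $\pi^{-1}\dashv\pi_*$, the computation over evenly covered opens, and the identity $\Hom_k(E,M)^{G}=\Hom_{G}(E,M)$ --- is routine.
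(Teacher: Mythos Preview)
Your proof is correct and follows essentially the same route as the paper's: both arguments use Galois descent along $\phi\colon\wt X\to X$ to identify $\Hom_{\Shv(X)}(\LL,F)$ with $G$-equivariant maps $\phi^{-1}\LL\to\phi^{-1}F$, observe that $\phi^{-1}\LL=(E_\LL)_{\wt X}$ is constant, and then apply the adjunction between the constant-sheaf functor and global sections. The only cosmetic difference is that the paper works directly in the category $\GShv(\wt X)$ and invokes the adjunction $(E\mapsto E_{\wt X})\dashv\Gamma(\wt X,-)$ there, while you work in $\Shv(\wt X)$ with an external $G$-action on Hom-sets and take invariants at the end; these two formulations are equivalent since $\Hom_{\GShv(\wt X)}(-,-)=\Hom_{\Shv(\wt X)}(-,-)^G$.
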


\begin{proof} This is essentially a consequence of the Galois correspondence between sheaves on $X$ and $G$-sheaves on $\wt X$, that we recall now.  A $G$-sheaf on $\wt X$ is a sheaf $\Q$ on $\wt X$ endowed with an action of $G$ which is compatible with the action of $G$ on $\wt X$; this means that if $\wt\Q\to \wt X$ denotes the étalé space associated to $\Q$, then $G$ acts on $\wt Q$ and the map $\wt\Q\to \wt X$ is equivariant. Morphisms of $G$-sheaves are morphisms of sheaves compatible with the action of $G$. We denote by $\GShv(\wt X)$ the category of $G$-sheaves on $\wt X$.

If $F$ is a sheaf on $X$, then $\phi^{-1}F$ is a $G$-sheaf on $\wt X$, since $\wt{\phi^{-1}F}=\wt X\times_X\wt F$ is endowed with the action of $G$ given by the action on $\wt X$ and the trivial action on $\wt F$. One has then a Galois equivalence
\[ \aligned \Shv(X)&\to  \GShv(\wt X)\\ F&\mapsto \phi^{-1}F\endaligned\] whose inverse sends a $G$-sheaf $\Q$ on $\wt X$ to  the sheaf of sections of $\wt F:=\wt Q/G\to \wt X/G=X$.

If $E$ is a $G$-module, then $E_{\wt X}$ is a (constant) $G$-sheaf on $\wt X$ and for any $G$-sheaf $\Q$ on $\wt X$ $\Gamma(\wt X,\Q)$ is a $G$-module. The functor $E\mapsto E_{\wt X}$ is left adjoint of the functor $\Q\mapsto \Gamma(\wt X,\Q)$.

Now, let us prove the proposition. Let us denote $E_F=\Gamma(\wt X,\phi^{-1}F)$. For any quasi-coherent sheaf $\LL$ on $X$ one has
\[\Hom_{\Qcoh(X)}(\LL,\Qc(F))=\Hom_{\Shv(X)}(\LL,F)=\Hom_{\GShv(\wt X)}(\phi^{-1}\LL,\phi^{-1}F).\]
Now, $\phi^{-1}\LL$ is a constant $G$-sheaf on $\wt X$, hence $\phi^{-1}\LL= (E_\LL)_{\wt X}$. 
Then
\[ \Hom_{\GShv(\wt X)}(\phi^{-1}\LL,\phi^{-1}F)= \Hom_{\text{$G$-mod}}(E_\LL,E_F)\overset{\text{Th.}\ref{Qcoh=Rep}}=\Hom_{\Qcoh(X)}(\LL,\LL_{E_F})\] and we conclude that $\Qc(F)=\LL_{E_F}$.
\end{proof}

\section{Asphericity and B\"{o}kstedt-Neeman Theorem} 

  We shall denote by $D^+(X):=D^+(\Shv(X))$ the derived category of bounded below complexes of sheaves of $k$-modules on $X$, $D^+(\Qcoh(X))$ the derived category of bounded below complexes of quasi-coherent sheaves of $k$-modules on $X$ and by $D_{\text{qc}}^+(X)$ the full subcategory of $D^+(X)$ whose objects are the complexes of sheaves with quasi-coherent cohomology. We still denote by $i$ the natural functor
  $$ i \colon D^+(\Qcoh (X)) \to D^+(X)$$ which takes values in $D_{\text{qc}}^+(X)$. Notice that though $i \colon \Qcoh(X) \hookrightarrow \Shv(X)$ is fully faithful, $ i \colon  D^+(\Qcoh (X)) \to  D^+(X)$ is not in general (as we shall see,  $i$ is fully faithful precisely when $X$ is aspherical). 
 
Let $$\RR \Qc \colon  D^+(X) \to  D^+(\Qcoh (X))$$ be the right derived functor of $\Qc\colon \Shv(X)\to \Qcoh(X)$. It is  right adjoint of $i$. Hence we have natural transformations
\[ \epsilon\colon \RR\Qc\circ i\to \id,\qquad \mu\colon i\circ\RR\Qc\to \id.\] 

Let $$\RR \Gamma(X,-)\colon D^+(X) \to D^+(k)$$ be the right derived functor of the global sections functor $\Gamma(X,-) \colon \Shv(X) \to \text{Mod}(k)$. For any $F\in D^+(X)$, we shall denote $$H^i(X,F)=H^i[\RR \Gamma(X,F)]$$ the i-th cohomology module of $F$.

Let us denote $$\RR_{\text{qc}} \Gamma(X,-)\colon D^+(\Qcoh (X)) \to D^+(k)$$ the right derived functor of $ \Gamma(X,-) \colon \Qcoh(X) \to \text{Mod}(k)$. For any  $\LL\in D^+(\Qcoh(X))$, $$H^i_{\text{qc}}(X,\LL):=H^i[\RR_{\text{qc}}\Gamma(X,\LL)]$$ shall be called the i-th {\em quasi-coherent cohomology module of $\LL$}.

We shall usually use the abbreviated notations $\RR\Gamma:=\RR\Gamma(X,\quad) $ and $\RR_{\text{qc}} \Gamma:=\RR_{\text{qc}} \Gamma(X,\quad)$.

Note that $\RR \Gamma$ is defined via resolutions by injective sheaves while $\RR_{\text{qc}} \Gamma$ is defined via resolutions by qc-injective quasi-coherent sheaves. In consequence, the triangle
$$\xymatrix{  D^+(\Qcoh (X))  \ar[rd]_{\RR_{\text{qc}}\Gamma} \ar[rr]^{i} &   & D^+X) \ar[ld]^{\RR \Gamma} \\
&  D^+(k) &    }$$
is not conmutative in general, because qc-injective quasi-coherent  sheaves are not $\Gamma(X,\quad)$-acyclic in general. However, there is a natural morphism 
$$\RR_{\text{qc}} \Gamma \to \RR \Gamma \circ i $$
which is not an isomorphism in general. In consequence, for any  $\LL\in D^+(\Qcoh(X))$  there is a natural morphism
$$ H^i_{\text{qc}}(X, \LL) \to H^i(X, \LL)$$
which is not an isomorphism in general. 

\begin{rem} By Remark \ref{obs},   {\em quasi-coherent cohomology coincides with  cohomology of groups}: For any $E\in D^+(\Mod(G))$
\[ H^i(G,E)=H^i_{\text{qc}}(X,\LL_E)\] where $H^i(G,E)$ stands for cohomology of groups and $\LL_E\in D^+(\Qcoh(X))$ is the object corresponding to $E$ by the equivalence $D^+(\Qcoh(X))=D^+(\Mod(G))$ induced by $\Qcoh(X)=\Mod(G)$.
\end{rem}

On the other hand, one has:

\begin{prop} \label{rqc isomorhism} The diagram
$$\xymatrix{  D^+(X)  \ar[rd]_{\RR\Gamma} \ar[rr]^{\RR \Qc} &   & D^+(\Qcoh (X)) \ar[ld]^{\RR_{\text{\rm qc}} \Gamma} \\
&  D^+(k) &    }$$
is conmutative; that is, there is a natural isomorphism 
$$ \RR \Gamma \simeq \RR_{\text{\rm qc}}\Gamma \circ \RR \Qc$$ and then a convergent spectral sequence
\[ H^p_{\text{\rm qc}}(X,R^q\Qc(F))\Rightarrow H^{p+q}(X,F).\]
\end{prop}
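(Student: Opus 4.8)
The plan is to recognise $\RR_{\text{qc}} \Gamma\circ\RR \Qc$ as the derived functor of a composition of additive functors and then invoke the Grothendieck spectral sequence. The starting point is an identity at the level of abelian categories:
\[ \Gamma(X,\quad)=\Gamma(X,\quad)\circ\Qc\colon\Shv(X)\to\Mod(k),\]
where on the right $\Gamma(X,\quad)$ denotes the global sections functor restricted to $\Qcoh(X)$. Indeed, the constant sheaf $k_X$ is quasi-coherent by Proposition \ref{qc=lc}, and $i(k_X)=k_X$, so the adjunction $i\dashv\Qc$ gives, for every sheaf $F$,
\[ \Gamma(X,F)=\Hom_{\Shv(X)}(k_X,F)=\Hom_{\Qcoh(X)}(k_X,\Qc(F))=\Gamma(X,\Qc(F)).\]

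Next I would use that $i\colon\Qcoh(X)\hookrightarrow\Shv(X)$ is exact and left adjoint to $\Qc$; as already noted in the text, a formal consequence is that $\Qc$ carries injective sheaves to injective objects of $\Qcoh(X)$, that is, to qc-injective quasi-coherent sheaves. Since $\RR_{\text{qc}} \Gamma$ is computed by qc-injective resolutions, every qc-injective sheaf is $\RR_{\text{qc}} \Gamma$-acyclic, hence $\Qc$ sends injectives to $\RR_{\text{qc}} \Gamma$-acyclic objects. As both $\Shv(X)$ and $\Qcoh(X)\cong\Mod(G_X)$ have enough injectives, the Grothendieck spectral sequence of the composite $\Gamma(X,\quad)\circ\Qc$ is available, and it yields at once the natural isomorphism $\RR \Gamma\simeq\RR_{\text{qc}} \Gamma\circ\RR \Qc$ together with, for every sheaf $F$, the convergent spectral sequence $H^p_{\text{qc}}(X,R^q\Qc(F))\Rightarrow H^{p+q}(X,F)$.

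To obtain the statement for an arbitrary $F\in D^+(X)$ I would pass to hyperderived functors. Choosing a quasi-isomorphism $F\xrightarrow{\ \sim\ }I^\bullet$ with $I^\bullet$ a bounded below complex of injective sheaves, the complex $\Qc(I^\bullet)$ is a bounded below complex of qc-injective quasi-coherent sheaves, so it represents $\RR \Qc(F)$ and also computes $\RR_{\text{qc}} \Gamma$ on it; therefore $\RR_{\text{qc}} \Gamma(\RR \Qc(F))=\Gamma(X,\Qc(I^\bullet))=\Gamma(X,I^\bullet)=\RR \Gamma(F)$, the middle equality being the identity of the first paragraph applied in each degree. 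The asserted spectral sequence is then the hypercohomology (second) spectral sequence of $\RR_{\text{qc}} \Gamma$ applied to the complex $\RR \Qc(F)$, with $E_2^{p,q}=H^p_{\text{qc}}(X,R^q\Qc(F))$; boundedness below guarantees its convergence to $H^{p+q}(X,F)$.

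The only genuinely delicate point is the compatibility of the two resolution conventions used in the last paragraph: one must be sure that the complex $\Qc(I^\bullet)$, manufactured from an injective resolution in $\Shv(X)$, may legitimately be used to compute $\RR_{\text{qc}} \Gamma$. This is exactly where the fact that $\Qc$ preserves injectives --- equivalently, that $i$ is exact --- does the work; everything else is formal.
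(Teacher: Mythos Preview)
Your proposal is correct and follows exactly the paper's approach: the paper's proof consists of the single sentence ``This is Grothendieck composite functor theorem (and Grothendieck spectral sequence), since $\Gamma(X,F)=\Gamma(X,\Qc(F))$ and $\Qc$ takes injective sheaves into qc-injective quasi-coherent sheaves.'' You have simply supplied the details behind these two ingredients --- the identity $\Gamma=\Gamma\circ\Qc$ via the adjunction $i\dashv\Qc$ applied to $k_X$, and the preservation of injectives coming from exactness of $i$ --- and spelled out the passage to complexes.
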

\begin{dem}  This is Grothendieck composite functor theorem (and Grothendieck spectral sequence), since $\Gamma(X,F)=\Gamma(X,\Qc(F))$ and $\Qc$ takes injective sheaves into qc-injectives quasi-coherent sheaves.
\end{dem}
\medskip

A description of  $R^i \Qc$ in terms of the universal covering is given by the following:
\begin{prop} \label{rqc description} For any $F \in D^+(X)$, one has that
$$ R^i \Qc(F) =H^i (\wt{X}, \phi^{-1} F)$$ via the equivalence $\Qcoh(X)=\Mod(G)$.
\end{prop}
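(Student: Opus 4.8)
The plan is to derive this from Proposition \ref{Qc=Gamma}, which already identifies $\Qc(F)$ with the $G$-module $\Gamma(\wt X,\phi^{-1}F)$, by passing to derived functors. The key point is that the equivalence $\Qcoh(X)=\Mod(G)$ of Theorem \ref{Qcoh=Rep} is an exact equivalence of abelian categories, hence induces an equivalence $D^+(\Qcoh(X))=D^+(\Mod(G))$; under this identification, $R^i\Qc(F)$ corresponds to the $i$-th cohomology of the derived functor of $F\mapsto\Gamma(\wt X,\phi^{-1}F)$, viewed as a functor $\Shv(X)\to\Mod(G)$. So I need to compute that derived functor and see that it is $H^i(\wt X,\phi^{-1}F)$ with its natural $G$-action.

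First I would factor the functor $F\mapsto\Gamma(\wt X,\phi^{-1}F)$ as $\Shv(X)\overset{\phi^{-1}}{\to}\GShv(\wt X)\overset{\Gamma(\wt X,-)}{\to}\Mod(G)$, using the Galois equivalence $\Shv(X)=\GShv(\wt X)$ recalled in the proof of Proposition \ref{Qc=Gamma}. Since $\phi^{-1}$ is an exact equivalence, it suffices to compute the right derived functor of $\Gamma(\wt X,-)\colon\GShv(\wt X)\to\Mod(G)$ and evaluate it at $\phi^{-1}F$. Now the point is that this derived functor, when composed with the forgetful functor $\Mod(G)\to\Mod(k)$, agrees with $\RR\Gamma(\wt X,-)$ computed on the underlying sheaf: concretely, if $\Q^\bullet$ is a complex of $G$-sheaves on $\wt X$ and we take an injective resolution in $\GShv(\wt X)$, its underlying complex of sheaves computes $H^i(\wt X,-)$ of the underlying sheaf, because a $G$-sheaf that is injective in $\GShv(\wt X)$ has $\Gamma(\wt X,-)$-acyclic (indeed flasque, after the standard Godement-type argument) underlying sheaf. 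Hence $R^i[\Gamma(\wt X,-)\circ\phi^{-1}](F)$ has underlying $k$-module $H^i(\wt X,\phi^{-1}F)$, and tracking the construction shows the $G$-action is the natural one coming from the $G$-action on $\phi^{-1}F$.

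The main obstacle is the acyclicity statement just invoked: one must check that an injective object of $\GShv(\wt X)$ has underlying sheaf acyclic for $\Gamma(\wt X,-)$ (and that injectives are preserved appropriately, or at least that there are enough objects adapted to both computations). The cleanest route is: $\GShv(\wt X)$ has enough injectives (it is a Grothendieck category, being equivalent to $\Shv(X)$), and since the forgetful functor $\GShv(\wt X)\to\Shv(\wt X)$ has an exact left adjoint (the functor $\bigoplus_{g\in G}g^*(-)$, or the ``induction'' $\ZZ[G]\otimes-$ at the level of étalé spaces), it preserves injectives; an injective $G$-sheaf is therefore an injective sheaf on $\wt X$, hence flasque, hence $\Gamma(\wt X,-)$-acyclic. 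Alternatively, one avoids this by noting that $\phi$ is a covering, so $\phi_*$ is exact and $\Gamma(\wt X,\phi^{-1}F)=\Gamma(X,\phi_*\phi^{-1}F)$ with $\phi_*\phi^{-1}F=\bigoplus$-type object carrying the $G$-action, and $R^i$ of this is $H^i(X,\phi_*\phi^{-1}F)=H^i(\wt X,\phi^{-1}F)$ by the Leray spectral sequence degenerating ($\phi$ étale). Either way, once acyclicity is in hand, the identification $R^i\Qc(F)=H^i(\wt X,\phi^{-1}F)$ as $G$-modules follows formally from Proposition \ref{Qc=Gamma} by passing to a resolution of $F$ by injective sheaves on $X$.
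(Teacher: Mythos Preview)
Your proposal is correct and follows essentially the same strategy as the paper: take an injective resolution $F\to I$ in $\Shv(X)$, use Proposition~\ref{Qc=Gamma} to write $R^i\Qc(F)=H^i\Gamma(\wt X,\phi^{-1}I)$, and then show that $\phi^{-1}$ sends injectives to $\Gamma(\wt X,-)$-acyclic (in fact injective) sheaves. The only cosmetic difference is in how this last step is argued: the paper shows directly that $\phi^{-1}\colon\Shv(X)\to\Shv(\wt X)$ has an exact left adjoint $\phi_!$ (reducing locally to a trivial covering, where $\phi_!$ is a direct sum), whereas you phrase it through the Galois equivalence $\Shv(X)\simeq\GShv(\wt X)$ and observe that the forgetful functor to $\Shv(\wt X)$ has the exact left adjoint $\bigoplus_{g\in G}g^{-1}(-)$; under the equivalence these two left adjoints coincide, so the arguments are really the same.
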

\begin{dem} Let $F\to I$ be an injective resolution. Then  $$R^i\Qc (F)=H^i(\Qc(I))=H^i\Gamma(\wt X,\phi^{-1}I)$$ by Proposition \ref{Qc=Gamma}. Since $\phi^{-1}I$ is a resolution of $\phi^{-1}F$, we conclude if we prove that $\phi^{-1}$ takes injective sheaves into injective sheaves. More generally, let us see that for any covering $\phi\colon X'\to X$ the functor $\phi^{-1}\colon \Shv(X)\to \Shv (  X')$ takes injective sheaves into injective sheaves. Since $\phi^{-1}$  is exact and commutes with direct products (because $\phi$ is a local homeomorphism), it has a left adjoint $\phi_!\colon \Shv(X')\to \Shv(X)$ and we conclude if we prove that $\phi_!$ is exact. Since $\phi$ is a local homeomorphism, for any open subset $j\colon U\hookrightarrow X$ and any sheaf $F'$ on $U$ one has an isomorphism
\[ \phi^{-1}(j_*F')=j'_*\phi_U^{-1}F'\] where $\phi_U\colon  \phi^{-1}(U)\to U$ is the restriction of $\phi$ to $ \phi^{-1}(U)$ and $j'$ is the inclusion of $\phi^{-1}(U)$ in $X'$. By adjunction, for any sheaf
$\Q$ on $X'$, one has
\[  (\phi_! F)_{\vert U}= (\phi_U)_!(F_{\vert \phi^{-1}(U)}).\]  Hence, the exactness of $\phi_!$ is reduced to the case that $\phi\colon X'\to X$ is a trivial covering: $X'=X\times I$ for some discrete space $I$. But then a sheaf $F$ on $X'$ is equivalent to a family of sheaves $\{ F_i\}_{i\in I}$ on $X$ and it is easy  to see that $\phi_!F=\underset{i\in I}\oplus F_i$, which is clearly an exact functor.
\end{dem}

Propositions \ref{rqc isomorhism} and  \ref{rqc description} may be used to give an elementary proof that cohomology of locally constant sheaves is invariant under homotopy (see \cite{KwasikSung}), as we shall see now.

Let $f \colon X \to Y$ be a continuous map (between connected and locally simply connected spaces). Let $F\in D^+(Y)$   and let us consider the induced morphism in cohomology
\[ H^i(Y , \LL) \to H^i(X, f^{-1} \LL).\]

\begin{prop} If $f$ is a homotopy equivalence and $F\in D^+_{\text{\rm qc}}(Y)$, then the morphism 
\[ H^i(Y , \LL) \to H^i(X, f^{-1} \LL) \] is an isomorphism.
\end{prop}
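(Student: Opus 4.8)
The plan is to reduce the statement to the \emph{classical} homotopy invariance of sheaf cohomology with constant coefficients, using the factorisation $\RR\Gamma\simeq\RR_{\text{qc}}\Gamma\circ\RR\Qc$ of Proposition~\ref{rqc isomorhism}, the description of $R^q\Qc$ by the universal covering in Proposition~\ref{rqc description}, and the identification of $\RR_{\text{qc}}\Gamma$ with group cohomology coming from Remark~\ref{obs}. Write $\phi_X\colon\wt X\to X$ and $\phi_Y\colon\wt Y\to Y$ for the universal coverings and $G_X$, $G_Y$ for the corresponding groups. Since $\wt X$ is simply connected, $f\circ\phi_X\colon\wt X\to Y$ lifts to a map $\wt f\colon\wt X\to\wt Y$ with $\phi_Y\circ\wt f=f\circ\phi_X$; any such lift is equivariant with respect to the induced isomorphism $f_*\colon G_X\to G_Y$ (an isomorphism because $f$ is a homotopy equivalence, via $G_X\cong\pi_1(X)$, $G_Y\cong\pi_1(Y)$), and $\wt f$ is again a homotopy equivalence, the lift of a homotopy equivalence to universal coverings being a homotopy equivalence (which follows from the homotopy lifting property of coverings). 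From now on identify $G_X$ with $G_Y$ along $f_*$ and write $G$ for the common group.

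Applying Proposition~\ref{rqc isomorhism} on $Y$ and on $X$ gives $\RR\Gamma(Y,F)\simeq\RR_{\text{qc}}\Gamma\circ\RR\Qc(F)$ and $\RR\Gamma(X,f^{-1}F)\simeq\RR_{\text{qc}}\Gamma\circ\RR\Qc(f^{-1}F)$, where $\RR\Qc$ is formed on $Y$, resp. on $X$; moreover, under $\Qcoh=\Mod(G)$ the functor $\RR_{\text{qc}}\Gamma$ is, by Remark~\ref{obs} (there $\Gamma(-,-)$ corresponds to $(-)^G$), the derived invariants $\RR\Gamma(G,-)$, the same for $X$ and for $Y$. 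Hence it suffices to produce an isomorphism $\RR\Qc(F)\to\RR\Qc(f^{-1}F)$ in $D^+(\Mod(G))$ that, after applying $\RR\Gamma(G,-)$, induces the morphism $H^i(Y,F)\to H^i(X,f^{-1}F)$. Since $\phi^{-1}$ is exact and preserves injective sheaves (proof of Proposition~\ref{rqc description}), that proof in fact shows $\RR\Qc(-)\simeq\RR\Gamma(\wt Y,\phi_Y^{-1}(-))$ on $D^+(Y)$ and $\RR\Qc(-)\simeq\RR\Gamma(\wt X,\phi_X^{-1}(-))$ on $D^+(X)$, as objects of $D^+(\Mod(G))$. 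The continuous map $\wt f$ gives a $G$-equivariant pull-back morphism $\RR\Gamma(\wt Y,\phi_Y^{-1}F)\to\RR\Gamma(\wt X,\wt f^{-1}\phi_Y^{-1}F)=\RR\Gamma(\wt X,\phi_X^{-1}f^{-1}F)$, using $\phi_X^{-1}f^{-1}=\wt f^{-1}\phi_Y^{-1}$, and on cohomology it is the pull-back map $H^q(\wt Y,\phi_Y^{-1}F)\to H^q(\wt X,\phi_X^{-1}f^{-1}F)$ of Proposition~\ref{rqc description}. So it remains to see this last map is an isomorphism.

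This is where the hypothesis $F\in D^+_{\text{qc}}(Y)$ is used: the cohomology sheaves $\mathcal H^q(F)$ are locally constant on $Y$ (Proposition~\ref{qc=lc}, $Y$ being locally connected), hence $\mathcal H^q(\phi_Y^{-1}F)=\phi_Y^{-1}\mathcal H^q(F)$ is locally constant on the simply connected $\wt Y$, so a constant sheaf $M^q_{\wt Y}$, and $\wt f^{-1}M^q_{\wt Y}=M^q_{\wt X}$. Comparing through $\wt f$ the hypercohomology spectral sequences
\[ H^p\bigl(\wt Y,\mathcal H^q(\phi_Y^{-1}F)\bigr)\ \Rightarrow\ H^{p+q}(\wt Y,\phi_Y^{-1}F),\qquad H^p\bigl(\wt X,\mathcal H^q(\wt f^{-1}\phi_Y^{-1}F)\bigr)\ \Rightarrow\ H^{p+q}(\wt X,\wt f^{-1}\phi_Y^{-1}F), \]
which converge since $F$ is bounded below, the induced map on the $E_2$-pages is $H^p(\wt Y,M^q_{\wt Y})\to H^p(\wt X,M^q_{\wt X})$, an isomorphism by the classical homotopy invariance of sheaf cohomology with constant coefficients applied to the homotopy equivalence $\wt f$. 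By the comparison theorem for spectral sequences the map on abutments is an isomorphism, and therefore so is the map obtained after applying $\RR\Gamma(G,-)$ and taking $H^i$; this is the asserted isomorphism $H^i(Y,F)\to H^i(X,f^{-1}F)$.

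The step I expect to be the main obstacle is the topological input at the beginning — the construction of $\wt f$ and the verification that it is an $f_*$-equivariant homotopy equivalence — together with the bookkeeping needed to check that the $G$-equivariant pull-back morphism built from $\wt f$ really does induce, after $\RR\Gamma(G,-)$, the natural morphism $H^i(Y,F)\to H^i(X,f^{-1}F)$. Once these are settled, the proof is a formal combination of Propositions~\ref{rqc isomorhism}, \ref{rqc description} and~\ref{qc=lc}, Remark~\ref{obs}, and the classical constant-coefficient homotopy invariance.
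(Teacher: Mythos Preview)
Your proof is correct and follows essentially the same route as the paper: lift $f$ to a homotopy equivalence $\wt f$ between universal covers, factor $\RR\Gamma$ as $\RR_{\text{qc}}\Gamma\circ\RR\Qc$ via Proposition~\ref{rqc isomorhism}, identify $\RR\Qc$ with cohomology on the universal cover via Proposition~\ref{rqc description}, and then reduce to the classical homotopy invariance of constant-coefficient cohomology because $\phi_Y^{-1}F$ has constant cohomology sheaves on $\wt Y$. The only cosmetic difference is that the paper compares the two Grothendieck spectral sequences $H^p_{\text{qc}}(-,R^q\Qc(-))\Rightarrow H^{p+q}(-,-)$ term by term, whereas you work directly at the level of $D^+(\Mod(G))$ and invoke a hypercohomology spectral sequence on $\wt Y$ and $\wt X$; this is the same argument unpacked slightly differently.
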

\begin{dem} Let $\phi_X\colon \wt X\to X$, $\phi_Y\colon \wt Y\to Y$ be universal coverings and $\wt f\colon \wt X\to\wt Y$ the induced continuous map (fixing base points). Then $\wt f$ is also an homotopy equivalence and induces an isomorphism of groups $\wt f\colon G_X\to G_Y$. Let us consider the convergent spectral sequences
\[ H^p_{\text{\rm qc}}(Y,R^q\Qc(F))\Rightarrow H^{p+q}(Y,F),\qquad   H^p_{\text{\rm qc}}(X,R^q\Qc(\phi^{-1}F))\Rightarrow H^{p+q}(X,\phi^{-1}F).\]

It sufices to see that the natural morphism $H^p_{\text{\rm qc}}(Y,R^q\Qc(F))\to H^p_{\text{\rm qc}}(X,R^q\Qc(\phi^{-1}F))$ is an isomorphism. If we prove that $f^{-1} R^q\Qc(F)\to  R^q\Qc(\phi^{-1}F)$ is an isomorphism, we conclude (because, since $f$ is a homotopy equivalence,  $f^{-1}\colon\Qcoh(Y)\to\Qcoh(X)$ is an equivalence by Theorem \ref{Qcoh=Rep}). Now, $f^{-1} R^q\Qc(F)\to  R^q\Qc(\phi^{-1}F)$ is an isomorphism by Proposition \ref{rqc description}, since $H^q(\wt Y, \phi_Y^{-1}F)\to H^q(\wt X, {\wt f}^{-1}\phi_Y^{-1}F)= H^q(\wt X, \phi_X^{-1}f^{-1}F)$ is an isomorphism because $\phi_Y^{-1}F$ is a complex with {\em constant} cohomology and $\wt f$ is an homotopy equivalence (we are using the invariance of the cohomology of constant sheaves under homotopy).
\end{dem} 

\begin{rem} Proposition \ref{rqc isomorhism} is not new, since it may be interpretated, via Theorem \ref{Qcoh=Rep}, in terms of cohomology of groups in the following way: for any $F\in D^+(X)$, one has an isomorphism
\[ \RR\Gamma(X,F)\simeq \RR\Gamma(G,\RR(\Gamma \phi^{-1})(F))\] where $\RR\Gamma(G,\quad)$ is the right derived functor of the functor of invariants $(\underline\quad)^G\colon\Mod(G)\to \operatorname{Mod}(k)$, and $\RR(\Gamma \phi^{-1})\colon D^+(X)\to D^+(\Mod(G))$ is the right derived functor of the functor $\Gamma\phi^{-1}\colon \Shv(X)\to \Mod(G)$, $F\mapsto \Gamma(\wt X,\phi^{-1}F)$. The associated spectral sequence is
\[ H^p(G,H^q(\wt X,\phi^{-1}F))\Rightarrow H^{p+q}(X,F).\]
\end{rem}

\subsection{Aspherical spaces} $\,$

Let $X$ be a connected and locally simply connected topological space and $\phi \colon \widetilde{X} \to X$ a universal covering.

\begin{defn}\label{aspherical} We say that $X$ is a  \textit{$k$-aspherical space} if $H^i(\widetilde{X}, M_{\wt X})=0$, for all $i >0$ and for any constant sheaf of $k$-modules $M_{\wt X}$ on $\widetilde{X}$.
\end{defn}

\begin{rem} If $k=\ZZ$, then universal coefficient formula for cohomology says that $X$ is $\ZZ$-aspherical if and only if the homology groups $H_i(\wt X,\ZZ)$  vanish for all $i>0$. By Hurewicz theorem, this is equivalent to say that $\pi_n(\wt X)=0$ for $n\geq 2$,  i.e., $\pi_n(X)=0$ for $n\geq 2$, which is the usual definition of an {\it aspherical space}.
\end{rem}

\begin{prop}\label{aspherical-RQc} $X$ is $k$-aspherical if and only if
\[ R^i\Qc(\LL)=0\] for any $i>0$ and any quasi-coherent sheaf $\LL$.
\end{prop}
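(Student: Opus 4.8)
The plan is to deduce the statement directly from Proposition \ref{rqc description}, which identifies $R^i\Qc(F)$ with the $G$-module $H^i(\wt X,\phi^{-1}F)$, combined with the observation (already used in Section 1 to define $E_\LL$) that the inverse image of a quasi-coherent sheaf along the universal covering $\phi$ is a \emph{constant} sheaf on $\wt X$.

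For the implication ($\Leftarrow$) I would argue as follows. Assume $R^i\Qc(\LL)=0$ for every $i>0$ and every quasi-coherent $\LL$. Given an arbitrary $k$-module $M$, the constant sheaf $M_X$ is quasi-coherent by Proposition \ref{qc=lc}, and $\phi^{-1}M_X=M_{\wt X}$. Proposition \ref{rqc description} then gives $H^i(\wt X,M_{\wt X})=R^i\Qc(M_X)=0$ for all $i>0$; since $M$ is arbitrary, this is exactly the $k$-asphericity of $X$.

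For ($\Rightarrow$), assume $X$ is $k$-aspherical and let $\LL$ be any quasi-coherent sheaf. Since $\wt X$ is simply connected, $\phi^{-1}\LL$ is constant, say $\phi^{-1}\LL=(E_\LL)_{\wt X}$ with $E_\LL=\Gamma(\wt X,\phi^{-1}\LL)$. By Proposition \ref{rqc description}, $R^i\Qc(\LL)$ corresponds, under the equivalence $\Qcoh(X)=\Mod(G)$, to the $G$-module $H^i(\wt X,\phi^{-1}\LL)=H^i(\wt X,(E_\LL)_{\wt X})$. Forgetting the $G$-action, this is the $i$-th cohomology $k$-module of a constant sheaf on $\wt X$, hence it vanishes for $i>0$ by the very definition of $k$-asphericity. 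As a quasi-coherent sheaf is zero precisely when the underlying $k$-module of its associated representation is zero, we conclude $R^i\Qc(\LL)=0$ for all $i>0$.

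There is no real obstacle here: the only point worth flagging is that the vanishing of $R^i\Qc(\LL)$ can be tested after forgetting the $G$-action on the corresponding representation, so that $k$-asphericity — a statement purely about the cohomology of constant sheaves on $\wt X$ — is precisely what is needed. Everything else is a formal application of Proposition \ref{rqc description} and Proposition \ref{qc=lc}.
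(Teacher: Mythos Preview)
Your proof is correct and follows essentially the same route as the paper: both directions are deduced directly from Proposition~\ref{rqc description}, using that $\phi^{-1}\LL$ is constant on $\wt X$ for the forward implication and taking $\LL=M_X$ for the converse. Your remark about testing vanishing after forgetting the $G$-action is a harmless clarification that the paper leaves implicit.
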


\begin{proof} This is a consequence of Proposition \ref{rqc description}. Indeed, assume that $X$ is $k$-aspherical. By Proposition \ref{rqc description}
\[ R^i\Qc(\LL)=H^i(\wt X,\phi^{-1}\LL)=0\] for $i>0$ because $\phi^{-1}\LL$ is a constant sheaf on $\wt X$ and $\wt X$ is aspherical. Conversely, if  $R^i\Qc(\LL)=0$ for any $i>0$ and any quasi-coherent sheaf $\LL$, then, for any $k$-module $M$ and any $i>0$
\[ H^i(\wt X, M_{\wt X})= H^i(\wt X,\phi^{-1} M_{X})\overset{\text{Prop.}\ref{rqc description}}= R^i\Qc(M_X)=0.\]
\end{proof}

\begin{prop}\label{BN-char} The following conditions are equivalent.
\begin{enumerate} \item $ i \colon D^+(\Qcoh (X)) \to  D^+_{\text{\rm qc}}(X)$  is an equivalence.
\item $i$ is fully faithful; equivalently, $\epsilon\colon \id\to \RR\Qc\circ i $ is an isomorphism.
\item $R^i\Qc(\LL)=0$ for any $i>0$ and any quasi-coherent sheaf $\LL$.
\item $ \RR\Qc \colon D^+_{\text{\rm qc}}(X) \to D^+(\Qcoh (X))   $  is fully faithful; equivalently, $\mu\colon i\circ \RR\Qc \to  \id$ is an isomorphism.
\item $ \RR\Qc \colon D^+_{\text{\rm qc}}(X) \to D^+(\Qcoh (X))   $  is an equivalence.
\end{enumerate}
\end{prop}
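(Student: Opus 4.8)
The strategy is to prove the cycle of implications $(2)\Rightarrow(3)\Rightarrow(4)\Rightarrow(5)\Rightarrow(1)\Rightarrow(2)$, leaning on the adjunction $i\dashv\RR\Qc$ and the two unit/counit transformations $\epsilon$ and $\mu$ together with the triangular identities. Throughout I will use that $\Qc$ is the identity on $\Qcoh(X)$, so that on the abelian level $\Qc\circ i=\id$, and that $i\colon\Qcoh(X)\hookrightarrow\Shv(X)$ is fully faithful and exact; the content is entirely about what happens after passing to derived categories.

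\textbf{$(2)\Rightarrow(3)$.} If $\epsilon\colon\id\to\RR\Qc\circ i$ is an isomorphism, apply it to a single quasi-coherent sheaf $\LL$, regarded as a complex concentrated in degree $0$. Then $\RR\Qc(i\LL)$ has cohomology concentrated in degree $0$, where it equals $\Qc(\LL)=\LL$. But $H^j(\RR\Qc(i\LL))=R^j\Qc(\LL)$ by definition of the right derived functor, so $R^j\Qc(\LL)=0$ for all $j>0$. Conversely this is essentially $(3)\Rightarrow(2)$: if all higher $R^j\Qc$ vanish on quasi-coherent sheaves, then for a bounded-below complex $\LL^\bullet$ of quasi-coherent sheaves a standard hyper-derived-functor / spectral-sequence argument (the spectral sequence $R^p\Qc(\mathcal H^q(\LL^\bullet))\Rightarrow \mathcal H^{p+q}(\RR\Qc(i\LL^\bullet))$ degenerates) shows $\RR\Qc(i\LL^\bullet)$ has the same cohomology as $\LL^\bullet$, and one checks the comparison map realizing this is exactly $\epsilon_{\LL^\bullet}$; hence $\epsilon$ is an isomorphism. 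So $(2)\Leftrightarrow(3)$.

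\textbf{$(3)\Rightarrow(4)$ and $(4)\Rightarrow(3)$.} For $(4)$, I want $\mu\colon i\circ\RR\Qc\to\id$ to be an isomorphism on $D^+_{\text{qc}}(X)$. Given $F\in D^+_{\text{qc}}(X)$, Proposition \ref{rqc description} gives $R^i\Qc(F)=H^i(\wt X,\phi^{-1}F)$; since $F$ has quasi-coherent cohomology and $\phi$ is the universal covering, $\phi^{-1}F$ has \emph{constant} cohomology on the simply connected space $\wt X$, and condition $(3)$ — reformulated via Proposition \ref{rqc description} as $H^j(\wt X,M_{\wt X})=0$ for $j>0$ — forces, by the spectral sequence computing $R^i\Qc(F)$ from $R^p\Qc(\mathcal H^q F)$, that $R^i\Qc(F)=\mathcal H^i(F)$ for all $i$ (as quasi-coherent sheaves, using the equivalence $\Qcoh(X)=\Mod(G)$). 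One then verifies that the natural map realizing this identification is $\mu_F$, so $\mu$ is an isomorphism on $D^+_{\text{qc}}(X)$. The converse $(4)\Rightarrow(3)$ follows by evaluating $\mu$ on a quasi-coherent sheaf $\LL\in D^+_{\text{qc}}(X)$ placed in degree $0$ and reading off $R^i\Qc(\LL)=H^i(i\RR\Qc(\LL))=H^i(\LL)=0$ for $i>0$.

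\textbf{The equivalences.} Now $(2)$ says $i$ is fully faithful (the unit is iso), and $(4)$ says $\RR\Qc$ restricted to $D^+_{\text{qc}}(X)$ is fully faithful (the counit is iso); since we have shown $(2)\Leftrightarrow(3)\Leftrightarrow(4)$, both units and counits of the adjunction $i\dashv\RR\Qc|_{D^+_{\text{qc}}}$ are isomorphisms, and an adjunction whose unit and counit are both isomorphisms is an adjoint equivalence. This gives $(2)\Rightarrow(1)$ and $(4)\Rightarrow(5)$ at once, and trivially $(1)\Rightarrow(2)$ and $(5)\Rightarrow(4)$ since an equivalence is in particular fully faithful. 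Assembling: $(1)\Leftrightarrow(2)\Leftrightarrow(3)\Leftrightarrow(4)\Leftrightarrow(5)$.

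\textbf{Main obstacle.} The routine points are the triangular-identity bookkeeping; the one step deserving care is the degeneration argument in $(3)\Rightarrow(2)$ and $(3)\Rightarrow(4)$ — namely that vanishing of $R^{>0}\Qc$ on the abelian category $\Qcoh(X)$ (equivalently on constant sheaves on $\wt X$, via Proposition \ref{rqc description}) propagates to the statement that $\epsilon$ (resp.\ $\mu$) is an isomorphism on all bounded-below complexes, and that the comparison maps coming from the spectral sequences are exactly the unit/counit. This is where one must be slightly careful that the complexes are bounded below so the hyper-cohomology spectral sequences converge, and that $D^+_{\text{qc}}(X)$ is the correct domain for $\mu$ (one genuinely needs $\phi^{-1}F$ to have constant cohomology, which is precisely the quasi-coherence hypothesis on $F$ combined with $\wt X$ simply connected).
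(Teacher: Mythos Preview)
Your proof is correct and follows essentially the same strategy as the paper: the core step is $(3)\Rightarrow(4)$ via the hypercohomology spectral sequence $R^p\Qc(\mathcal H^qF)\Rightarrow R^{p+q}\Qc(F)$, and the remaining implications are formal adjunction bookkeeping. The paper runs the cycle $1\Rightarrow 2\Rightarrow 3\Rightarrow 4\Rightarrow 5\Rightarrow 1$ rather than your two-way equivalences $(2)\Leftrightarrow(3)\Leftrightarrow(4)$, but this is only an organizational difference.

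One remark: in your $(3)\Rightarrow(4)$ you detour through Proposition~\ref{rqc description} and the universal covering $\wt X$, reformulating $(3)$ as vanishing of $H^j(\wt X,M_{\wt X})$, before returning to the spectral sequence in terms of $R^p\Qc(\mathcal H^qF)$. This detour is unnecessary and the paper avoids it: since $\mathcal H^qF$ is quasi-coherent, condition $(3)$ applies to it directly, the spectral sequence degenerates, and $R^n\Qc(F)=\Qc(\mathcal H^nF)=\mathcal H^nF$. The proposition is purely formal and does not depend on the description of $\Qc$ via $\wt X$; that description is used elsewhere in the paper (Proposition~\ref{aspherical-RQc} and the main theorem), but not here.
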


\begin{proof} $1)\Rightarrow 2)\Rightarrow 3)$ are immediate. Let us see that $3)\Rightarrow 4)$. Let $F\in D^+_{\text{qc}}(X)$. By hypothesis, $R^p\Qc (H^q(F))=0$ for $p>0$ and any $q$. Hence, $R^n\Qc(F)=\Qc(H^n(F))=H^n(F)$, and then $i(\RR\Qc(F))\to F$ is an isomorphism.

$4)\Rightarrow 5)$ It suffices to see that $\epsilon(\LL)\colon \LL\to \RR\Qc( i(\LL)) $ is an isomorphism; for this, it suffices to see the isomorphism after applying $i$. Now, the composition $i(\LL)\overset{i(\epsilon(\LL))}\to i( \RR\Qc ( i(\LL))) \overset{\mu(i(\LL))}\to i(\LL)$ is the identity and $\mu(i(\LL))$ is an isomorphism by hypothesis, hence $\epsilon(i(\LL))$ is an isomorphism.

$5)\Rightarrow 1)$ Since $i$ is left adjoint of $\RR\Qc$, this is immediate.
\end{proof}

\begin{thm}\label{theorem} Let $X$ be a connected and locally simply connected space. The following conditions are equivalent:
\begin{enumerate}
\item $X$ satisfies B\"{o}kstedt-Neeman theorem: The functor $ i \colon D^+(\Qcoh (X)) \to  D^+_{\text{\rm qc}}(X)$  is an equivalence.

\item $X$ is  $k$-aspherical.

\item The triangle
$$\xymatrix{  D^+(\Qcoh (X))  \ar[rd]_{\RR_{\text{\rm qc}}\Gamma} \ar[rr]^{i} &   & D^+_{\text{qc}}(X) \ar[ld]^{\RR \Gamma} \\
&  D^+(k) &    }$$
is conmutative, i.e. the natural morphism $\RR_{\text{\rm qc}}\Gamma\to \RR\Gamma\circ i$ is an isomorphism.

\item Cohomology of groups coincides with sheaf-cohomology: if $G$ is the fundamental group of $X$, then, for any $G$-module $E$
\[ H^i(G,E)=H^i(X,\LL_E)\] where $\LL_E$ is the locally constant sheaf associated to $E$.
\end{enumerate}
\end{thm}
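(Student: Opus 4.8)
The plan is to prove the cycle of equivalences $(1)\Leftrightarrow(2)\Leftrightarrow(3)\Leftrightarrow(4)$ by routing almost everything through the functor $\RR\Qc$ and the two auxiliary results already established. Most of the work has in fact been done: Proposition \ref{BN-char} gives that condition $(1)$ is equivalent to the vanishing $R^i\Qc(\LL)=0$ for $i>0$ and all quasi-coherent $\LL$, and Proposition \ref{aspherical-RQc} identifies that same vanishing condition with $k$-asphericity. So $(1)\Leftrightarrow(2)$ is immediate by concatenating these two propositions, and the only real content left is to weave in conditions $(3)$ and $(4)$.

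For $(2)\Rightarrow(3)$: assuming $X$ is $k$-aspherical, Proposition \ref{aspherical-RQc} gives $R^i\Qc(\LL)=0$ for $i>0$ on quasi-coherent sheaves, hence the Grothendieck spectral sequence of Proposition \ref{rqc isomorhism}, applied to an object $\LL\in D^+(\Qcoh(X))$ viewed inside $D^+(X)$, degenerates: $\RR\Qc\circ i(\LL)\simeq \Qc\circ i(\LL)=\LL$ since $\Qc$ is the identity on quasi-coherent sheaves. Therefore $\RR\Gamma\circ i\simeq \RR_{\text{qc}}\Gamma\circ\RR\Qc\circ i\simeq \RR_{\text{qc}}\Gamma$, which is exactly the commutativity in $(3)$; one should check that this chain of isomorphisms is the natural morphism $\RR_{\text{qc}}\Gamma\to\RR\Gamma\circ i$ mentioned in the statement, but that is a formal diagram chase on adjunction units. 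Conversely, for $(3)\Rightarrow(2)$, I would specialize the isomorphism $\RR_{\text{qc}}\Gamma(\LL)\xrightarrow{\sim}\RR\Gamma(X,\LL)$ to $\LL=I$ a qc-injective quasi-coherent sheaf: the left side is concentrated in degree $0$, so $H^i(X,I)=0$ for $i>0$. Then I must upgrade this acyclicity of qc-injectives to the vanishing of $R^i\Qc$; the cleanest route is the Grothendieck spectral sequence $H^p_{\text{qc}}(X,R^q\Qc(F))\Rightarrow H^{p+q}(X,F)$ together with the description $R^q\Qc(F)=H^q(\wt X,\phi^{-1}F)$ from Proposition \ref{rqc description}, applied to $F=M_X$ a constant sheaf — but a slicker alternative is to apply the hypothesis directly: take any constant sheaf $M_X$ with injective resolution $M_X\to I^\bullet$ in $\Shv(X)$, observe $\Qc(I^\bullet)$ computes $\RR\Qc(M_X)$ and $\Gamma(X,I^\bullet)$ computes $\RR\Gamma(X,M_X)$, and compare. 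Honestly the quickest argument is: $(3)$ for the input $M_X$ together with the factorization of $\RR\Gamma$ through $\RR\Qc$ forces $\RR_{\text{qc}}\Gamma\circ\RR\Qc(M_X)\simeq\RR_{\text{qc}}\Gamma(M_X)$, and since $\RR_{\text{qc}}\Gamma$ is (on the fundamental group side) the group cohomology functor which is conservative enough on the relevant objects, one deduces $\RR\Qc(M_X)\simeq M_X$, i.e. $R^i\Qc(M_X)=0$ for $i>0$; but $R^i\Qc(M_X)=H^i(\wt X,M_{\wt X})$ by Proposition \ref{rqc description}, giving asphericity. I would present whichever of these is shortest and avoids an unjustified conservativity claim — probably going through the description in Proposition \ref{rqc description} directly.

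For $(1)\Leftrightarrow(4)$: condition $(4)$ is simply the translation of $(3)$ — or of the commutativity $\RR_{\text{qc}}\Gamma\simeq\RR\Gamma\circ i$ — under the equivalences $\Qcoh(X)=\Mod(G)$ of Theorem \ref{Qcoh=Rep} and the identification of $\RR_{\text{qc}}\Gamma$ with group cohomology recorded in the Remark after Proposition \ref{rqc isomorhism} (using $E^G=\Gamma(X,\LL_E)$ from Remark \ref{obs}). Concretely, $H^i_{\text{qc}}(X,\LL_E)=H^i(G,E)$ always, and $H^i(X,\LL_E)$ is the sheaf cohomology; condition $(3)$ says these agree for all inputs, which is verbatim condition $(4)$. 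So $(3)\Leftrightarrow(4)$ is a dictionary translation with no mathematical content beyond citing the earlier remarks.

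The main obstacle is the implication $(3)\Rightarrow(2)$ (equivalently $(4)\Rightarrow(2)$): going backwards from a statement about \emph{derived global sections} to a statement about \emph{$R^i\Qc$ vanishing} is the one step that is not purely formal, because the triangle relating $\RR\Gamma$, $\RR_{\text{qc}}\Gamma$ and $\RR\Qc$ only gives a spectral sequence, not an immediate isomorphism. The resolution is to feed in the right test objects — constant sheaves $M_X$, whose images $\phi^{-1}M_X=M_{\wt X}$ are again constant — and to use Proposition \ref{rqc description} to recognize $R^i\Qc(M_X)$ as exactly the cohomology $H^i(\wt X,M_{\wt X})$ that appears in the definition of $k$-asphericity; after that the spectral sequence $H^p_{\text{qc}}(X,R^q\Qc(M_X))\Rightarrow H^{p+q}(X,M_X)$ combined with the hypothesis $(3)$ (which makes the abutment equal to $H^*(G,M)=H^*_{\text{qc}}(X,M_X)=H^p_{\text{qc}}(X,R^0\Qc(M_X))$, i.e. forces the higher rows to die) pins down $R^q\Qc(M_X)=0$ for $q>0$ by an induction on degree. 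I would spell out that induction carefully, as it is the only place where something could go subtly wrong.
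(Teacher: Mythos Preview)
Your plan for $(1)\Leftrightarrow(2)$, $(2)\Rightarrow(3)$, and $(3)\Leftrightarrow(4)$ is fine and matches the paper. The gap is exactly where you suspect: the step $(3)$/$(4)\Rightarrow(2)$, and none of the three approaches you sketch actually closes it.

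The spectral‐sequence induction on constant sheaves does \emph{not} force the higher rows to die. Running the argument carefully, the hypothesis that the edge map $E_2^{n,0}=H^n_{\text{qc}}(X,\LL)\to H^n(X,\LL)$ is an isomorphism for all $n$ yields only that all differentials into the bottom row vanish and that $E_\infty^{p,q}=0$ for $q>0$. For the position $(0,q)$ there are no incoming differentials, so indeed $E_2^{0,q}=E_\infty^{0,q}=0$; but this says precisely $(R^q\Qc(\LL))^G=0$, nothing more. Even after an induction that kills rows $1,\dots,q-1$, you still only obtain $(R^q\Qc(\LL))^G=0$ for every $\LL$, and as you yourself note, $\RR_{\text{qc}}\Gamma=\RR(-)^G$ is \emph{not} conservative (e.g.\ for $G=\ZZ$ the $G$-module $k[t,t^{-1},(t-1)^{-1}]$ has vanishing group cohomology in all degrees). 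So the induction stalls one step short: invariants vanish, but the module need not.

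The paper resolves this not via the spectral sequence but by a coinduction trick on qc-injectives. Starting from $(4)$, for a qc-injective $\I$ one has $H^i(X,\I)=H^i(G,E_\I)=0$ for $i>0$. Next, $R^i\Qc(\I)=H^i(\wt X,\phi^{-1}\I)=H^i(X,\phi_*\phi^{-1}\I)$, using that $R^j\phi_*$ vanishes on constant sheaves because $\phi$ is a local homeomorphism. The crucial point is that $\phi_*\phi^{-1}\I$ is \emph{again} qc-injective: for any $\LL$,
\[
\Hom_{\Qcoh(X)}(\LL,\phi_*\phi^{-1}\I)=\Hom_{\Qcoh(\wt X)}(\phi^{-1}\LL,\phi^{-1}\I)=\Hom_k(E_\LL,E_\I),
\]
and $E_\I$ is injective as a $k$-module because it is injective as a $k[G]$-module (restriction along the exact functor $k[G]\otimes_k-$). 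Applying the hypothesis to $\phi_*\phi^{-1}\I$ then gives $R^i\Qc(\I)=0$; now a qc-injective resolution of any $\LL$ computes $\RR\Qc(\LL)$, and Proposition~\ref{aspherical-RQc} finishes. The missing idea in your proposal is precisely this passage from $\I$ to $\phi_*\phi^{-1}\I$---in group-theoretic language, replacing $E_\I$ by the coinduced module $\Hom_k(k[G],E_\I)$, which is what rescues the argument from the failure of conservativity.
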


\begin{dem} \noindent $(1) \Rightarrow (3)$. By Proposition \ref{rqc isomorhism}, $\RR_{\text{qc}} \circ \RR\Qc \simeq \RR \Gamma$. Composing with $i$, one has $$\RR_{\text{qc}}\Gamma \circ \RR\Qc\circ i \simeq \RR \Gamma \circ i.$$  We conclude because $\RR\Qc\circ i\simeq \id$ by hypothesis.

$(3)\Rightarrow (4)$. By hypothesis, for any quasi-coherent sheaf $\LL$, the natural morphism $H^i_\text{qc}(X,\LL)\to H^i(X,\LL)$ is an isomorphism. Since $H^i(G,E)=H^i_\text{qc}(X,\LL_E)$, we are done.

$(4)\Rightarrow (2)$. By Proposition \ref{aspherical-RQc}, it suffices to prove that $R^i\Qc(\LL)=0$ for any $i>0$ and any quasi-coherent sheaf $\LL$. 

Let us prove first that any qc-injective sheaf $\I$ is $\Qc$-acyclic. By hypothesis $H^i(X,\I)=H^i(G,E_\I)=0$ for $i>0$ (since $E_\I$ is an injective $G$-module). Now
\[ R^i\Qc(\I)\overset {\ref{rqc description}}=H^i(\wt X,\phi^{-1}\I)=H^i(X,\phi_*\phi^{-1}\I)\] because $R^i\phi_*M_{\wt X}=0$ for any $i>0$ and any constant sheaf $M_{\wt X}$ on $\wt X$. If we prove that $\phi_*\phi^{-1}\I$ is a qc-injective sheaf, we conclude that $R^i\Qc(\I)=0$. Since $\phi_*$ preserves quasi-coherence, we have only to see the qc-injectivity of $\phi_*\phi^{-1}\I$. For any $\LL\in\Qcoh(X)$ one has
$$ \Hom_{\Qcoh(X)}(\LL,\phi_* \phi^{-1} I) = \Hom_{\Qcoh(\wt X)}(\phi^{-1} \LL, \phi^{-1} I) = \Hom_{k-\text{mod}}(E_\LL, E_\I)  $$ and $E_\I$ is an injective $k$-module because it is an injective $G$-module. This proves the qc-injectivity of $\phi_*\phi^{-1}\I$.

Let us conclude now that $R^i\Qc(\LL)=0$ for any $i>0$ and any quasi-coherent sheaf $\LL$. Let $\LL\to\I$ be a qc-injective resolution of $\LL$. Then, by De Rham's theorem,
\[ R^i\Qc(\LL)=H^i[\Qc(\I)]=H^i(\I)=0\] for $i>0$.

$(2) \Leftrightarrow (1)$ It follows from Propositions \ref{aspherical-RQc} and \ref{BN-char}.
\end{dem}

\begin{rem} Let us see the similarity between the B\"{o}kstedt-Neeman theorem on schemes and on topological spaces. Let us recall the notion of affine ringed space given in \cite{Sanchos}. Let $(X,\OO)$ be a ringed space, $A=\OO(X)$ and $\pi\colon (X,\OO)\to (*,A)$ the natural morphism of ringed spaces (where $*$ denotes the topological space with one  point). Then $(X,\OO)$ is called {\em affine} if it satisfies:

(1) The natural functor: $\pi^*\colon \{\text{$A$-modules}\}\to \operatorname{Qcoh}(X)$ is an equivalence.

(2) $H^i(X,\OO)=0$ for $i>0$.

If $X$ is a scheme, then it is affine in the previous sense if and only if it is an affine scheme in the ordinary sense. In the topological case (i.e., $\OO=\ZZ$ is the constant sheaf) then $(X,\ZZ)$ satisfies (1) if and only if $X$ is simply connected and then $(X,\ZZ)$ is affine if and only if $X$ is homotopically trivial (see \cite{Sanchos} for details). Thus, $X$ is aspherical if and only if $\wt X$ is affine.

Let $X$ be a quasi-compact semi-separated scheme,   $U_1,\dots,U_n$   a finite open covering by  affine schemes and let us denote $\wt X=\underset i\coprod U_i$ and $\phi\colon \wt X\to X$ the natural morphism. Notice that $\wt X$ is an affine scheme, $\phi$ is an affine morphism (because $X$ is semi-separated) and $\phi$ is faithfully flat. The validity of B\"{o}kstedt-Neeman theorem relies essentially on two steps: first, it holds for affine schemes (hence for $\wt X$); then it holds for $X$ because $\phi$ is affine and  faithfully flat.

In the topological case, let $\wt X$ be the universal covering of $X$. It still holds that $\phi\colon \wt X\to X$ is faithfully flat ($f^{-1}$ is exact and $f^{-1}F=0$ implies $F=0$) and affine (any connected component of the  preimage of an affine open subset is affine). Thus, the validity of B\"{o}kstedt-Neeman on $X$ is reduced to  the affineness of $\wt X$, which is precisely the asphericity of $X$.
\end{rem}

\end{document}